\providecommand{\keywords}[1]{\textbf{\textit{Key words and phrases }} #1}
\providecommand{\subjclass}[1]{\textbf{\textit{2020 Mathematics Subject Classification.}} #1}
\theoremstyle{definition}
\newtheorem{theo}{Theorem}[section]
\newtheorem{pr}[theo]{Proposition}
 \newtheorem{lem}[theo]{Lemma}
 \newtheorem{coro}[theo]{Corollary}
\theoremstyle{remark}
\newtheorem{rema}[theo]{Remark}
\theoremstyle{definition}
\newtheorem{defi}[theo]{Definition}
\numberwithin{equation}{section}
\newcommand\cu{\underline{C}}
\newcommand\du{\underline{D}}
\newcommand\au{\underline{A}}
\newcommand\bu{\underline{B}}
\newcommand\hu{\underline{H}}
\newcommand\obj{\operatorname{Obj}}
\newcommand\id{\operatorname{id}}
\newcommand\add{\operatorname{add}}
\DeclareMathOperator\kar{\operatorname{Kar}}
\DeclareMathOperator\wkar{\operatorname{wKar}}
\DeclareMathOperator\co{\operatorname{Cone}}
\newcommand\hw{{\underline{Hw}}}
\newcommand\wstu{w_{st}}
\newcommand\wstub{w_{st}^b}
\newcommand\hwstub{\hw_{st}^b}
\newcommand\z{{\mathbb{Z}}}
 \newcommand\lan{\langle}
\newcommand\ra{\rangle}
\newcommand\ns{\{0\}}
\newcommand\cp{\mathcal{P}}
\newcommand\opp{^{op}}
\begin{document}

\title
 {The hearts of weight structures are the weakly idempotent complete categories}
\author{Mikhail V. Bondarko, Sergei V. Vostokov
   \thanks{ 
	This work was funded by the Russian Science Foundation under  grant no. 16-11-00200.}}\maketitle
\begin{abstract} 
In this note we 
 prove that additive categories that occur as hearts of weight structures are precisely the {\it weakly idempotent complete}
 categories, that is, the categories where all split monomorphisms give direct sum decompositions. We also give several other conditions equivalent to weak idempotent completeness (some of them are completely new), and discuss weak idempotent completions of additive categories. \end{abstract}
\subjclass{Primary 18E05 18G80; 
  Secondary 18E40,  18G35.}

\keywords{Weakly idempotent complete category, idempotent completion, weak retraction-closure, triangulated category, weight structure, heart.} 


 \section*{Introduction}
The goal of this note is to study additive categories that can occur as hearts of weight structures on triangulated categories.\footnote{Recall that weight structures are certain "cousins" of $t$-structures (see Remark \ref{rrts} below) that were introduced in \cite{bws} and \cite{konk}; in the latter paper they were called co-t-structures. Weight structures have several interesting applications to representation theory, motives, and algebraic topology; see \cite{bonspkar} for some references.} Actually, an  answer to this question can be extracted from Theorem 4.3.2(I,II) of \cite{bws}; yet the corresponding calculation of hearts does not contain all the detail. For this reason, in the current paper we study the corresponding {\it weakly idempotent complete} (additive) categories and {\it weak idempotent completions} in detail.  
 Another notion important for this paper is the {\it weak retraction-closure} (of a subcategory; see Definition \ref{dwrcl}). 

Let us briefly describe 
the contents of the paper. An additive category $\bu$  is said to be weakly idempotent complete if any $\bu$-split monomorphism gives a direct sum decomposition (see Definition \ref{dstable}(1)); obviously, any idempotent complete category is weakly idempotent complete. $\bu$ is said to be weakly retraction-closed in $\bu'\supset\bu$ if for a $\bu'$-isomorphism $Y\cong X\bigoplus Z$ the object $Z$ belongs to $\obj \bu$  whenever $X$ and $Y$ do. In \S\ref{swic} we prove that $\bu$ is weakly idempotent complete if and only if it is weakly retraction-closed in a (weakly) idempotent complete category $\bu'\supset \bu$. These conditions are equivalent to the existence of $\bu''\subset \bu$ and an idempotent complete $\bu'\supset \bu$ such that $\bu$ equals the corresponding weak retraction-closure of $\bu''$ in $\bu'$. Moreover, the weak retraction-closure of $\bu$ in the idempotent completion $\kar(\bu)$ gives a canonical weak idempotent completion $\wkar(\bu)$ of $\bu$, and we prove that the 
  universality of the $\kar$-construction also yields that of the  $\wkar$-one. Furthermore,  
	 $\bu$ is weakly idempotent complete if and only if any contractible bounded $\bu$-complex 
	splits (into a direct sum of isomorphisms; see Proposition \ref{pstable}(\ref{ise3})).

In \S\ref{swws} we recall some basics on weight structures. Recall that these are given by classes $\cu_{w\le 0}$ and $\cu_{w\ge 0}$ of objects of a triangulated category $\cu$; the heart $\hw$ of $w$ is the additive subcategory $\cu_{w\le 0}\cap \cu_{w\ge 0}$. 
 The aforementioned Theorem 4.3.2(I,II) of \cite{bws} (along with the somewhat stronger Corollary 2.1.2 of \cite{bonspkar}) gives an almost complete characterization of {\it bounded} weight structures. 
 Loc. cit. implies that any (additive) {\it connective} subcategory $\bu$ of $\cu$ gives a  canonical bounded weight structure $w$ on the smallest strictly full triangulated subcategory $\du$ of $\cu$ that contains $\bu$, and $\hw$ consists of $\du$-retracts of objects of $\bu$. Now, Theorem \ref{tstable} implies that $\hw$ is equivalent to $\wkar(\bu)$; thus $\hw$ is equivalent to $\bu$ whenever $\bu$ is weakly idempotent complete. Moreover, the results of \S\ref{swic} easily imply that  weakly idempotent complete categories are precisely the ones that occur as weakly retraction-closed subcategories of triangulated categories; they are also the categories equivalent to hearts of (bounded) weight structures. Furthermore, we prove that a full embedding $\bu\to \bu'$ induces an equivalence of $K^b(\bu) $ with $K^b(\bu')$ if and only if $\bu'$ is essentially a subcategory of $\wkar(\bu)$, and $K_0^{\add}(\bu)\cong K_0^{\add}(\bu')$ if this is the case.


\section{On additive categories and (weak) retraction-closures}
\label{snotata}

All categories and  functors  (including embedding ones) in this paper will be additive. 

\begin{itemize}

\item Given a category $C$ and  $X,Y\in\obj C$  we will write $C(X,Y)$ for  the set of morphisms from $X$ to $Y$ in $C$.

\item For categories $C'$ and $C$ we write $C'\subset C$ if $C'$ is a full 
subcategory of $C$.

\item Given a category $C$ and  $X,Y\in\obj C$, we say that $X$ is a {\it
retract} of $Y$ 
 if $\id_X$ can be 
 factored through $Y$.\footnote{Clearly,  if $C$ is triangulated 
then $X$ is a retract of $Y$ if and only if $X$ is its direct summand.}\ 


\item A 
 class of objects $D$ in (an additive category) $\bu$ 
is said to be {\it retraction-closed} in $\bu$ if it contains all $\bu$-retracts of its elements. 

\item  For any $(\bu,D)$ as above we will write 
$\kar_{\bu}(D)$ 
for the class of all $\bu$-retracts of 
 elements of $D$. 

\item We will say that 
 $\bu$ is {\it idempotent complete} if any idempotent endomorphism gives a direct sum decomposition in it; cf. Definition 1.2 of \cite{bashli}.

\item The {\it idempotent completion} $\kar(\bu)$ (no lower index) of 
 $\bu$ is the category of ``formal images'' of idempotents in $\bu$.
Respectively, its objects are the pairs $(B,p)$ for $B\in \obj \bu,\ p\in \bu(B,B),\ p^2=p$, and the morphisms are given by the formula 
$$\kar(\bu)((X,p),(X',p'))=\{f\in \bu(X,X'):\ p'\circ f=f \circ p=f \}.$$ 
 The correspondence  $B\mapsto (B,\id_B)$ (for $B\in \obj \bu$) fully embeds $\bu$ into $\kar(\bu)$, and it is well known that $\kar(\bu)$ is essentially the smallest idempotent complete 
 category containing $\bu$; see Proposition 1.3 of ibid.
\end{itemize}

Now we will give  definitions that appear to be (more or less) new.

\begin{defi}\label{dwrcl}
Let $\bu'$ be an (additive) subcategory of $\bu$.

1. We will write $\wkar_{\bu}(\bu')$ for the full subcategory of $\bu$ whose objects are those  $Z\in \obj \bu$ 
 such that there exist $X,Y\in \obj \bu'$ 
 with $X\bigoplus Z\cong Y$. We will call $\wkar_{\bu}(\bu')$ the {\it weak retraction-closure} of $\bu'$ in $\bu$. 

2. We will say that $\bu'$ is {\it weakly retraction-closed} in $\bu$ if $\wkar_{\bu}(\bu')=\bu$.
\end{defi}

Below we will need the following simple statements.

\begin{lem}\label{lwrcl}
Let $\bu'$ be a subcategory of $\bu$.

1. If $\bu'$ is retraction-closed in $\bu$ then it is also weakly retraction-closed in $\bu$.

2. $\wkar_{\bu}(\bu')$ is weakly retraction-closed in $\bu$.
\end{lem}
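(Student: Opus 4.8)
The starting point for both parts is the observation that $\bu'\subseteq \wkar_\bu(\bu')$ always holds: for any $Z\in\obj\bu'$ one has $0\bigoplus Z\cong Z$ with $0,Z\in\obj\bu'$. Hence in each case being weakly retraction-closed is equivalent to the reverse inclusion $\wkar_\bu(\bu')\subseteq\bu'$, and this is what I would establish. For part 1, suppose $\bu'$ is retraction-closed in $\bu$ and take $Z\in\obj\wkar_\bu(\bu')$, so that $X\bigoplus Z\cong Y$ for some $X,Y\in\obj\bu'$. Then $Z$ is a direct summand of $Y$, hence a $\bu$-retract of $Y$; since $Y\in\obj\bu'$ and $\bu'$ contains all its $\bu$-retracts, we get $Z\in\obj\bu'$, as required.

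For part 2, write $\bu''=\wkar_\bu(\bu')$ and take $Z\in\obj\wkar_\bu(\bu'')$, so $X\bigoplus Z\cong Y$ with $X,Y\in\obj\bu''$. Unfolding the definition of $\bu''$, I would choose $A,B\in\obj\bu'$ with $A\bigoplus X\cong B$ and $C,D\in\obj\bu'$ with $C\bigoplus Y\cong D$. The key step is then to splice these three isomorphisms into a single weak-retraction relation for $Z$ over $\bu'$: one computes
$$(B\bigoplus C)\bigoplus Z\cong A\bigoplus X\bigoplus C\bigoplus Z\cong A\bigoplus C\bigoplus Y\cong A\bigoplus D,$$
where the middle isomorphism uses $X\bigoplus Z\cong Y$ and the outer ones use $B\cong A\bigoplus X$ and $C\bigoplus Y\cong D$. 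Setting $X'=B\bigoplus C$ and $Y'=A\bigoplus D$ --- both objects of $\bu'$ since $\bu'$ is additive, hence closed under finite direct sums --- this exhibits $X'\bigoplus Z\cong Y'$ with $X',Y'\in\obj\bu'$, i.e.\ $Z\in\obj\wkar_\bu(\bu')=\obj\bu''$. Thus $\wkar_\bu(\bu'')=\bu''$; equivalently, the operator $\wkar_\bu$ is idempotent.

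The only real subtlety is the bookkeeping in part 2: one must add the correct auxiliary objects to both sides so that the surplus summands $A,B,C,D$ coming from the two applications of the definition of $\bu''$ can be repackaged into the single objects $X'$ and $Y'$ of $\bu'$. This is precisely where closure of the additive subcategory $\bu'$ under finite direct sums is essential, and it is the one place the argument would break down if $\bu'$ were allowed to be merely a full (non-additive) subcategory. Beyond that, both statements are routine diagram chases with direct sums and require no further input.
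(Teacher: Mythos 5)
Your proof is correct and follows essentially the same route as the paper: part 1 is the observation that a summand is a retract, and part 2 is exactly the paper's splicing argument, with your $A,B,C,D$ and $X'=B\bigoplus C$, $Y'=A\bigoplus D$ matching the paper's $X_1,X_2,Y_1,Y_2$ and $X_2\bigoplus Y_1$, $Y_2\bigoplus X_1$. The only (harmless) addition is your explicit remark that $\bu'\subseteq\wkar_\bu(\bu')$ always holds, which the paper leaves implicit.
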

\begin{proof}
1. Obvious.

2. For an object $Z$ of $\bu$ and $X,Y\in\obj \wkar_{\bu}(\bu')$ such that $X\bigoplus Z\cong Y$ we should prove that $Z$ is an object of $\wkar_{\bu}(\bu')$ as well. Now we recall Definition \ref{dwrcl}(1) and choose $X_1,X_2,Y_1,Y_2\in \obj \bu'$ such that $X\bigoplus X_1\cong X_2$ and  $Y\bigoplus Y_1\cong Y_2$. Then $Z\bigoplus (X_2\oplus Y_1)\cong Y\bigoplus Y_1\bigoplus X_1\cong Y_2\bigoplus X_1 $. Since both $X_2  \bigoplus Y_1$   and  $Y_2\bigoplus X_1$ are objects of $\bu'$, we obtain the result. 
\end{proof}

\section{On weakly idempotent complete categories}\label{swic}

Let us give some more definitions. Throughout this paper $\bu$ will be an (additive) category.

\begin{defi}\label{dstable}
1. We will say that 
 $\bu$ is {\it weakly idempotent complete} if 
  any split $\bu$-monomorphism $i:X\to Y$ (that is, $\id_X$ equals $p\circ i$ for some $p\in \bu(Y,X)$)  is isomorphic to the 
	 monomorphism $\id_X\bigoplus 0:X\to X\bigoplus Z$ for some object  $Z$ of $\bu$. 
	
	2. Assume that $\bu$ is  essentially small. Then the split Grothendieck group $K_0^{\add}(\bu)$  is the abelian group whose generators are 
the isomorphism classes of objects of $\bu$, and the relations are of the form $[B]=[A]+[C]$ for all   $A, B,C\in\obj \bu$ such that $B\cong A\bigoplus C$.  
\end{defi}

Now we prove that this definition is equivalent to several other ones.

\begin{pr}\label{pstable}
The following assumptions on 
  $\bu$ are equivalent.

\begin{enumerate}
\item\label{ise1} $\bu$ is weakly idempotent complete.

\item\label{ise2c} $\bu$ is  weakly retraction-closed in any (additive) category $\bu'$ containing $\bu$ as a strictly full subcategory.


\item\label{ise2w} $\bu'$ is a weakly retraction-closed subcategory  of 
 some weakly  idempotent complete category $\bu'$.

\item\label{iwic} The obvious embedding of $\bu$ into the category $\wkar(\bu)=\wkar_{\kar(\bu)}(\bu)$ (see Definition \ref{dwrcl}(1)) is an equivalence.

\item\label{ise2p} $\bu$ is equivalent to the category $\wkar(\bu'')$ for some (additive) category $\bu''$. 

\item\label{ise2} There exist additive categories $\bu''\subset \bu\subset \bu'$ such that $\bu'$ is idempotent complete and $\bu=\wkar_{\bu'}(\bu'')$.

\item\label{ise3} If a bounded $\bu$-complex is contractible (i.e., it is zero in $K^b(\bu)$) then it splits, that is, it 
  has the form $\bigoplus \id_{N^i}[-i]$ for some $N^i\in \obj \bu$. 
\end{enumerate}
\end{pr}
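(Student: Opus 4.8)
My plan is to close the loop of implications in two cycles through condition~(\ref{ise1}) together with the separate equivalence (\ref{ise1})~$\Leftrightarrow$~(\ref{ise3}): first the ``retraction-closure'' cycle (\ref{ise1})~$\Rightarrow$~(\ref{ise2c})~$\Rightarrow$~(\ref{ise2w})~$\Rightarrow$~(\ref{ise1}), then the formal chain (\ref{ise1})~$\Rightarrow$~(\ref{iwic})~$\Rightarrow$~(\ref{ise2p})~$\Rightarrow$~(\ref{ise2})~$\Rightarrow$~(\ref{ise2w}) to fold in (\ref{iwic}),~(\ref{ise2p}),~(\ref{ise2}), and finally the complex-theoretic statement (\ref{ise3}). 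The one observation I would use repeatedly is that when a split monomorphism $i\colon X\to Y$ (with chosen retraction $p$, $pi=\id_X$) admits a complement, that complement is an \emph{absolute} (split) cokernel of $i$; hence it is unique up to isomorphism and is preserved by every additive functor. This is exactly what lets me move decompositions between $\bu$ and any ambient category.

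For (\ref{ise1})~$\Rightarrow$~(\ref{ise2c}): if $\bu$ is strictly full in $\bu'$ and $X\bigoplus Z\cong Y$ with $X,Y\in\obj\bu$, $Z\in\obj\bu'$, then the induced inclusion $X\to Y$ is a $\bu$-split monomorphism, so (\ref{ise1}) gives a $\bu$-decomposition $Y\cong X\bigoplus Z'$ with $Z'\in\obj\bu$; uniqueness of the complement in $\bu'$ forces $Z\cong Z'$, so $Z\in\obj\bu$. For (\ref{ise2c})~$\Rightarrow$~(\ref{ise2w}) I would take $\bu'=\kar(\bu)$, which is idempotent (hence weakly idempotent) complete; after replacing the image of $\bu$ by its isomorphism-closure so as to make it strictly full, (\ref{ise2c}) says $\bu$ is weakly retraction-closed in it. For (\ref{ise2w})~$\Rightarrow$~(\ref{ise1}) a $\bu$-split monomorphism splits in the weakly idempotent complete $\bu'$, and weak retraction-closedness returns the resulting complement to $\bu$. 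The remaining chain is mostly formal: (\ref{ise1})~$\Rightarrow$~(\ref{iwic}) is the equality $\wkar_{\kar(\bu)}(\bu)=\bu$ just obtained; (\ref{iwic})~$\Rightarrow$~(\ref{ise2p}) is immediate with $\bu''=\bu$; (\ref{ise2p})~$\Rightarrow$~(\ref{ise2}) transports the equivalence along $\bu''\subset\wkar(\bu'')\subset\kar(\bu'')$ with $\bu'=\kar(\bu'')$; and (\ref{ise2})~$\Rightarrow$~(\ref{ise2w}) follows from Lemma~\ref{lwrcl}(2), which yields $\wkar_{\bu'}(\bu)=\wkar_{\bu'}(\wkar_{\bu'}(\bu''))=\wkar_{\bu'}(\bu'')=\bu$, exhibiting $\bu$ as weakly retraction-closed in the idempotent complete $\bu'$.

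For (\ref{ise1})~$\Rightarrow$~(\ref{ise3}) I would induct on the number of nonzero terms of a bounded contractible complex $C^\bullet$ with contracting homotopy $s$. At the lowest nonzero degree $b$ one has $C^{b-1}=0$, so the homotopy identity collapses to $s^{b+1}d^b=\id_{C^b}$ and the leading differential $d^b$ is a split monomorphism; weak idempotent completeness splits it as $C^{b+1}\cong C^b\bigoplus K$ with $d^b$ the inclusion. Since $d^{b+1}d^b=0$, this is an honest direct sum of complexes $C^\bullet\cong \id_{C^b}[-b]\bigoplus\tilde C^\bullet$ with $\tilde C^\bullet$ supported in degrees $>b$; being a summand of a contractible complex, $\tilde C^\bullet$ is contractible, and induction finishes the decomposition into elementary pieces.

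The converse (\ref{ise3})~$\Rightarrow$~(\ref{ise1}) is the step I expect to be the main obstacle, since it requires realizing an arbitrary split monomorphism as a differential of a complex that is contractible yet not visibly split. Given $i\colon X\to Y$ with $pi=\id_X$, I would consider
$$ X\xrightarrow{\ i\ }Y\xrightarrow{\ \id_Y-ip\ }Y\xrightarrow{\ p\ }X $$
in degrees $0,1,2,3$: the consecutive composites vanish because $pi=\id_X$, and $(p,\id_Y,i)$ provides a contracting homotopy, so the complex is contractible. By (\ref{ise3}) it is isomorphic to some $\bigoplus\id_{N^j}[-j]$, and comparing terms degree by degree gives $N^0\cong X$ and $Y=C^1\cong N^0\bigoplus N^1\cong X\bigoplus N^1$; chasing $d^0=i$ through the splitting isomorphism identifies $i$ with $\id_X\bigoplus 0\colon X\to X\bigoplus N^1$, which is weak idempotent completeness with $Z=N^1$. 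The only genuinely delicate points are guessing this four-term complex and the routine set-theoretic bookkeeping (passing to isomorphism-closures to make $\bu$ strictly full in $\kar(\bu)$) needed to invoke (\ref{ise2c}); everything else is formal manipulation of split idempotents and the complement-uniqueness noted at the outset.
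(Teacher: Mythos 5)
Your proposal is correct and follows essentially the same route as the paper: the formal implications among conditions (\ref{ise1})--(\ref{ise2}) are closed via the same isomorphism-closure bookkeeping and Lemma \ref{lwrcl}(2), the implication (\ref{ise1})$\implies$(\ref{ise3}) is the same induction splitting off the lowest-degree differential, and (\ref{ise3})$\implies$(\ref{ise1}) uses exactly the paper's four-term complex $X\to Y\to Y\to X$ (you verify contractibility by an explicit homotopy where the paper passes through $K(\kar\bu)$, a negligible difference). The only divergence is the ordering of the cycle of implications, which does not change the substance.
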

\begin{proof}
 Obviously, condition \ref{ise1} implies condition \ref{ise2c}, and condition \ref{iwic} implies  condition \ref{ise2p}. Next, replacing $\bu$ by its isomorphism-closure in $\kar(\bu)$ we obtain that condition \ref{ise2c} implies condition \ref{iwic}. Moreover, if  $\bu$ is equivalent to the category $\wkar(\bu'')$ then we can replace  $\bu''$ and $\kar(\bu'')$ by  equivalent categories so that $\bu''\subset \bu\subset \bu'$, $\bu'$ is equivalent to $\kar(\bu'')$, and $\bu$ is a strict subcategory of $\bu'$. Hence condition  \ref{ise2p} implies condition \ref{ise2}.

Next, applying Lemma \ref{lwrcl}(2) 
 we obtain that condition \ref{ise2p}  implies condition  \ref{ise2w}; note that 
$\bu'$ is weakly idempotent complete since it is idempotent complete. 

Now assume that $\bu'$ is a weakly retraction-closed subcategory  of a weakly  idempotent complete category $\bu'$. We should prove that  
any  split $\bu$-monomorphism $i:X\to Y$ is isomorphic to the 
	 monomorphism $\id_X\bigoplus 0:X\to X\bigoplus Z$ for some object  $Z$ of $\bu$. Since $\bu'$ is weakly  idempotent complete, we obtain that $Z$ as desired exists in the category $\bu'\supset \bu$. Since $\bu$ is a weakly retraction-closed subcategory  of $\bu'$ and $X\bigoplus Z\cong Y$, we obtain $Y\in \obj \bu$; hence $\bu$ is weakly idempotent complete indeed.

Lastly we prove the equivalence of conditions  \ref{ise1} and  \ref{ise3}. If  
$p\circ i=\id_X$ for some $\bu$-morphisms $X\stackrel{i}{\to} Y\stackrel{p}{\to} X$ then 
the 
 complex $$\dots\to 0\to X\stackrel{i}{\to} Y\stackrel{\id_Y-i\circ p}{\longrightarrow} Y \stackrel{p}{\to} X\to 0\to\dots$$ is easily seen to be split in $K(\kar \bu)$; hence it is zero in $K(\bu)$ as well. If it is also split in $K(\bu)$ then $i$ and $p$ come from a $\bu$-isomorphism $Y\cong X\bigoplus Z$; thus condition \ref{ise3} follows from condition \ref{ise1}. 

Let us establish the converse implication by the induction on the essential length of a  complex $M=(M^i)$; that is, we look for the minimal $l\ge 0$ such that the terms $M^i$ are zero for $i<m$ and $i>n$, where $n-m=l$. Contractible complexes (over an arbitrary additive category) obviously splits if its essentiall length 
  is at most  $1$. Now, assume that $M$ is contractible of length $l\ge 2$, and all contractible complexes of length less than $l$ split. Clearly, the contracting homotopy provides a factorization of $id_{M^m}$ through the boundary $d^m:M^m\to M^{m+1}$. Hence the complex $M$ is isomorphic to $\co(\id_{M^m})[-1-m]\bigoplus M'$, where $M'$ is of length $l-1$. Obviously, $M'$ is contractible as well and we conclude by applying the inductive assumption.
\end{proof}

\begin{rema}\label{rstable}
\begin{enumerate}
\item\label{irse1} 
The notions of a weakly retraction-closed subcategory and of the weak retraction-closure are obviously self-dual.

Hence conditions \ref{ise2c}, \ref{iwic}, \ref{ise2p}, \ref{ise2} of our proposition are  self-dual as well; this is also true for condition \ref{ise3} (that is, these assumptions are fulfilled for $\bu$ if and only if  they are valid for $\bu\opp$).
 Thus the notion of weak idempotent completeness is self-dual. 
Hence weak idempotent completions can also be characterized by the duals of 
 conditions \ref{ise1} and \ref{ise2w}  in Proposition \ref{pstable}. In particular,  we obtain Lemma 7.1 of \cite{buhler}. 

\item\label{irwkar4}
We will call the category $\wkar(\bu)=\wkar_{\kar(\bu)}(\bu)$ the {\it weak idempotent completion} of $\bu$ following Remark 7.8 of \cite{buhler}. 
We will 
 justify this terminology and also 
 prove and extend the claim made in loc. cit. in Corollary \ref{cwidc} below.

\item\label{irse2p}
Let $R$ be an (associative unital) ring. Let us 
 describe certain categories that fulfil the assumptions of Proposition \ref{pstable}(\ref{ise2}).

 Take
 $\bu''$ to be the category  of free left finitely generated $R$-modules and $\bu'$ to be the category of all left $R$-modules. 
 Then   the corresponding  category $\bu\cong \wkar(\bu'')$ 
  is just the category of  finitely generated 
	stably free left $R$-modules.\footnote{The authors are deeply grateful to Vladimir Sosnilo for this nice observation.}

This example demonstrates that weakly idempotent complete categories do not have to be idempotent complete and gives a nice example of weak idempotent completions (along with weak retraction-closures). 

\item\label{irse3} The argument used in the proof of the implication (\ref{ise1})$\implies$(\ref{ise3}) easily implies that any bounded above or below contractible $\bu$-complex splits as well. 

On the other hand,  
 Proposition 10.9 of \cite{buhler} 
 says that arbitrary (unbounded) contractible $\bu$-complexes split if and only if $B$ is idempotent complete. 

These statements (along with our arguments above) are closely related to  Remark 1.12 of \cite{neederex}.
\end{enumerate}
\end{rema}


\begin{coro}\label{cwidc} Let $F:\bu_1\to\bu_2$ be an additive functor.

1. Then 
 there exists a natural "idempotent complete version" $\kar(F):\kar(\bu_1)\to \kar(\bu_2)$ that restricts to a functor  $\wkar(F):\wkar(\bu_1)\to \wkar(\bu_2)$.

2. Consequently, if $\bu_2$ is (weakly) idempotent complete then $F$ extends to an additive functor from $\kar(\bu_1)$ (resp. from $\wkar(\bu_1)$) into $\bu_2$.

3. Assume that $\bu$ is  essentially small.   Then $\kar(\bu)$ also is, and $\wkar(\bu)$ consists of those 
 $M\in \obj \kar(\bu)$ such that the class of  $M$ in 
  $K_0(\kar(\bu))$ (see Definition \ref{dstable}(2)) belongs to the image of the obvious homomorphism $K_0^{\add}(\bu)\to K_0^{\add}(\kar(\bu))$.  
\end{coro}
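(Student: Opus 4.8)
The plan is to prove the three parts of Corollary \ref{cwidc} in order, leveraging the functoriality of the $\kar$-construction and the characterizations of $\wkar$ established in Proposition \ref{pstable}.

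\medskip

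For part 1, I would first construct $\kar(F)$ by the obvious formula on objects and morphisms: send $(B,p)\in\obj\kar(\bu_1)$ to $(F(B),F(p))\in\obj\kar(\bu_2)$, noting that $F(p)$ is again idempotent since $F$ is additive (and hence preserves the relation $p^2=p$), and send a morphism $f$ to $F(f)$; one checks immediately that the defining condition $p'\circ f=f\circ p=f$ is preserved after applying $F$. The functoriality (compatibility with composition and identities) is routine. The substantive point is that $\kar(F)$ restricts to $\wkar(\bu_1)\to\wkar(\bu_2)$: given $Z\in\obj\wkar(\bu_1)$, by Definition \ref{dwrcl}(1) there are $X,Y\in\obj\bu_1$ with $X\bigoplus Z\cong Y$ in $\kar(\bu_1)$; applying $\kar(F)$ and using that it is additive yields $\kar(F)(X)\bigoplus\kar(F)(Z)\cong\kar(F)(Y)$ in $\kar(\bu_2)$, and since $\kar(F)(X)=F(X)$ and $\kar(F)(Y)=F(Y)$ lie in $\obj\bu_2$, this exhibits $\kar(F)(Z)$ as an element of $\wkar_{\kar(\bu_2)}(\bu_2)=\wkar(\bu_2)$, as required.

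\medskip

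Part 2 is then a formal consequence of part 1 together with Proposition \ref{pstable}(\ref{iwic}). If $\bu_2$ is idempotent complete, then the embedding $\bu_2\to\kar(\bu_2)$ is an equivalence, so composing $\kar(F):\kar(\bu_1)\to\kar(\bu_2)$ with a quasi-inverse of this equivalence and restricting along $\bu_1\hookrightarrow\kar(\bu_1)$ extends $F$ to a functor $\kar(\bu_1)\to\bu_2$. If $\bu_2$ is merely weakly idempotent complete, then by Proposition \ref{pstable}(\ref{iwic}) the embedding $\bu_2\to\wkar(\bu_2)$ is an equivalence, and the same recipe applied to $\wkar(F):\wkar(\bu_1)\to\wkar(\bu_2)$ yields the desired extension to $\wkar(\bu_1)$.

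\medskip

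Part 3 is where the real content lies. That $\kar(\bu)$ is essentially small when $\bu$ is follows since objects of $\kar(\bu)$ are pairs $(B,p)$ with $B\in\obj\bu$ and $p$ an idempotent in the (small) set $\bu(B,B)$, giving only a set of isomorphism classes. For the description of $\wkar(\bu)$ inside $\kar(\bu)$, I would argue by double inclusion. If $M\in\obj\wkar(\bu)$, then $X\bigoplus M\cong Y$ with $X,Y\in\obj\bu$, so in $K_0^{\add}(\kar(\bu))$ we have $[M]=[Y]-[X]$, which is manifestly in the image of $K_0^{\add}(\bu)\to K_0^{\add}(\kar(\bu))$. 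The hard part, which I expect to be the main obstacle, is the converse: if the class of $M$ in $K_0^{\add}(\kar(\bu))$ lies in the image of $K_0^{\add}(\bu)$, one must produce actual objects $X,Y\in\obj\bu$ and an honest isomorphism $X\bigoplus M\cong Y$ in $\kar(\bu)$. The difficulty is that equality in the Grothendieck group only guarantees a relation after adding a common summand (stable isomorphism), not a direct isomorphism; thus from $[M]=[A]-[B]$ with $A,B\in\obj\bu$ one first obtains $B\bigoplus M\bigoplus W\cong A\bigoplus W$ in $\kar(\bu)$ for some $W\in\obj\kar(\bu)$, and the task is to absorb the auxiliary object $W\in\obj\kar(\bu)$ into objects of $\bu$. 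I would handle this by choosing a complementary idempotent: since $W$ is a $\kar(\bu)$-retract of some $B'\in\obj\bu$, there is $W'\in\obj\kar(\bu)$ with $W\bigoplus W'\cong B'\in\obj\bu$; adding $W'$ to both sides of the stable isomorphism converts the auxiliary $\kar(\bu)$-object into the $\bu$-object $B'$, yielding $B\bigoplus M\bigoplus B'\cong A\bigoplus B'$ with all summands other than $M$ now genuine objects of $\bu$. Setting $X=B\bigoplus B'\in\obj\bu$ and $Y=A\bigoplus B'\in\obj\bu$ gives $X\bigoplus M\cong Y$, which places $M$ in $\wkar(\bu)$ by Definition \ref{dwrcl}(1).
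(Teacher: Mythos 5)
Your proposal is correct and follows essentially the same route as the paper's proof in all three parts: the termwise construction $(B,p)\mapsto(F(B),F(p))$ and additivity of $\kar(F)$ for part 1, composition with the equivalence $\bu_2\simeq\kar(\bu_2)$ (resp. $\bu_2\simeq\wkar(\bu_2)$) for part 2, and for part 3 the reduction of stable isomorphism to an honest isomorphism by complementing the auxiliary summand $W$ to an object of $\bu$. Your write-up of part 3 is in fact somewhat more detailed than the paper's, which compresses the final step into ``our assertion follows immediately.''
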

\begin{proof}
1. It is easily seen that 
 $F$ yields a canonical  additive functor $\kar(F)$ that sends  $(B,p)$ for $B\in \obj \bu_1,\ p\in \bu_1(B,B),\ p^2=p$ into $(F(B),f(p))$ indeed. 

Next, if $X\bigoplus Z\cong Y$ in $\kar(\bu_1)$ then $\kar(F)(X)\bigoplus \kar(F)(Z)\cong \kar(F)(Y)$. Thus if an object $Z$ of $\kar(\bu_1)$ belongs to $\wkar(\bu_1)$ then $\kar(F)(Z)$ belongs to $\wkar(\bu_2)$ indeed.

2. If $\bu_2$ is idempotent complete then it is equivalent to the category $\kar(\bu_2)$; hence one can modify $\kar(F)$ to obtain the extension in question.

Similarly, if  $\bu_2$ is weakly idempotent complete then it is equivalent to the category $\wkar(\bu_2)$ (see condition \ref{iwic} in Proposition \ref{pstable}); thus one can modify $\wkar(F)$ to obtain the result.

3. The essential smallness of  $\kar(\bu)$ obviously follows from that of $\bu$.

Next we note that 
the definition of $K_0(\kar(\bu))$ immediately implies the following: we have $[M]=[N_1]-[N_2]$ for some objects $N_i$ of $\bu$ (being more precise, here we consider the objects $(N_i,\id_{N_i})$ of $\kar(\bu)$)  whenever there exists $B\in \obj \kar(\bu)$ such that $M\bigoplus B \bigoplus N_2\cong N_1\bigoplus B$. Since $B$ is a retract of 
 an object of $\bu$, this is equivalent to the existence of  $B'\in \obj \bu$ such that $M\bigoplus B'  \bigoplus N_2\cong N_1 \bigoplus N\bigoplus B'$. 
  Our assertion follows immediately.
\end{proof}

\begin{rema}\label{rterm}
Let us now relate the terminology in the current paper to that in earlier ones.

It appears  that the term "weakly idempotent complete" for a category $\bu$ was introduced in \cite[Definition 7.2]{buhler}. 
 In \cite{freydsplit} (probably, this is where this notion was originally introduced) 
 it was said that {\it retracts have complements} (in $\bu$),\footnote{Recall that the main Proposition of ibid. says that 
weakly idempotent complete categories closed with respect to countable coproducts are idempotent complete.} whereas in  Definition 1.11 of \cite{neederex} 
 it was said that $\bu$ is {\it semi-saturated}. Most of the conditions in Proposition \ref{pstable} and Theorem \ref{tstable} were not mentioned in these papers.

Recall also that weak idempotent completions were called {\it small envelopes} in Definition 4.3.1(3) of \cite{bws} and {\it semi-saturations} in \S1.12.1 of of \cite{neederex}. 
\end{rema}

\section{Weight structures: short reminder}\label{swws}

Let us start from the definition of a weight structure (note however that the only axiom of weight structures that we will mention 
 explicitly in this text is the axiom (i)). The symbol $\cu$ 
  in this paper will always denote some triangulated category.

 It will be convenient for us to use the following notation below: 
for $D,E\subset \obj \cu$ we will write $D\perp E$ if $\cu(X,Y)=\ns$ for all $X\in D$ and $Y\in E$.

\begin{defi}\label{dwstr}

I. A couple $(\cu_{w\le 0},\cu_{w\ge 0})$ of classes  
 of objects of $\cu$ 
will be said to give a weight structure $w$ on 
$\cu$ if 
 the following conditions are fulfilled.

(i) $\cu_{w\le 0}$ and $\cu_{w\ge 0}$ are 
retraction-closed in $\cu$ (i.e., contain all $\cu$-retracts of their objects).

(ii) {\bf Semi-invariance with respect to translations.}

$\cu_{w\le 0}\subset \cu_{w\le 0}[1]$ and $\cu_{w\ge 0}[1]\subset
\cu_{w\ge 0}$.

(iii) {\bf Orthogonality.}

$\cu_{w\le 0}\perp \cu_{w\ge 0}[1]$.

(iv) {\bf Weight decompositions}.

 For any $M\in\obj \cu$ there
exists a distinguished triangle
$$L_wM\to M\to R_wM {\to} L_wM[1]$$
such that $L_wM\in \cu_{w\le 0} $ and $ R_wM\in \cu_{w\ge 0}[1]$.
\end{defi}

We will also need the following definitions.

\begin{defi}\label{dwso}
Assume that a triangulated category $\cu$ is endowed with a weight structure $w$, $i\in \z$.

\begin{enumerate}
\item\label{idh}
 The full category $\hw\subset \cu$ whose objects are
$\cu_{w=0}=\cu_{w\ge 0}\cap \cu_{w\le 0}$ 
 is called the {\it heart} of 
$w$.

\item\label{id=i}
 $\cu_{w\ge i}$ (resp. $\cu_{w\le i}$, resp. $\cu_{w= i}$) will denote the class $\cu_{w\ge 0}[i]$ (resp. $\cu_{w\le 0}[i]$, resp. $\cu_{w= 0}[i]$).

\item\label{idbob} 
We will  say that $(\cu,w)$ is {\it  bounded} and $\cu$ is a {\it bounded weighted 
category}  if  
$\obj \cu=\cup_{i\in \z} \cu_{w\ge i}=\cup_{i\in \z} \cu_{w\le i}$.



\item\label{idrest} 
Let $\du$ be a full triangulated subcategory of $\cu$.

We will say that $w$ {\it restricts} to $\du$ whenever the couple $w_{\du}= (\cu_{w\le 0}\cap \obj \du,\ \cu_{w\ge 0}\cap \obj \du)$ is a weight structure on $\du$.


\item\label{idneg}
 We will say that the subcategory 
 $\hu\subset \cu$  is {\it connective} (in $\cu$) if $\obj \hu\perp (\cup_{i>0}\obj (\hu[i]))$.\footnote{ In earlier texts of the first author connective subcategories were called {\it negative} ones. Moreover, in several papers (mostly, on representation theory and related matters) a connective subcategory satisfying certain additional assumptions was said to be {\it silting}; this notion generalizes the one of {\it tilting}.}

\item\label{idense} 
The  smallest   strictly  full triangulated subcategory of $\cu$ containing $\hu$ will be called the subcategory {\it strongly generated} by  $\hu$ in $\cu$. 

\item\label{iextcl} We will say that a class $\cp\subset \obj \cu$ is {\it extension-closed} if $\cp$  contains $0$ and for any $\cu$-distinguished triangle $A \to C \to B \to A[1]$ the object $B$ belongs to $\cp$ whenever (both) $A$ and $C$ do.
\end{enumerate}
\end{defi}

\begin{rema}\label{rstws}

1. A  simple (and still  quite useful) example of a weight structure comes from the stupid filtration on the homotopy category of cohomological complexes $K(\bu)$ for an arbitrary additive  $\bu$; it can also be restricted to the subcategory $K^b(\bu)$ of bounded complexes (see Definition \ref{dwso}(\ref{idrest})). In this case $K(\bu)_{\wstu\le 0}$ (resp. $K(\bu)_{\wstu\ge 0}$) is the class of complexes that are homotopy equivalent to complexes  concentrated in degrees $\ge 0$ (resp. $\le 0$); see Remark 1.2.3(1) of \cite{bonspkar} for more detail. 

The heart of the weight structure $\wstu$ is the retraction-closure  of $\bu$  in  $K(\bu)$; hence it is equivalent to $\kar(\bu)$ (since both $K^{-}(\bu)$ and $K^+(\bu)$ are idempotent complete).

The restriction of  $\wstu$ to $K^b(\bu)$ will be denoted by $\wstub$; 
 in Theorem \ref{tstable} 
  below we will demonstrate that its heart $\hwstub$ is equivalent to $\wkar(\bu)$.


2. In 
 this note we use the ``homological convention'' for weight structures. This is the convention used by several papers of the first author  (including \cite{bonspkar} and \cite{bwcp}).  However, in \cite{bws} the so-called cohomological convention was used; 
 in this convention the functor $[1]$ "shifts weights" by $-1$. This is one of the reasons for us not to cite ibid. below; another one is that the exposition of  the theory of weight complexes (that we will apply in the proof of Theorem \ref{tstable}) in \S3 of \cite{bws} is rather inaccurate. 
\end{rema}

Let us now recall the relation of connective subcategories to weight structures.

\begin{pr}\label{pbw}
Let $\cu$ be a triangulated category.

I. Assume that $w$ is  a weight structure on $\cu$.

1.  Then  the classes $\cu_{w\le 0}$, $\cu_{w\ge 0}$, and $\cu_{w=0}$ are  extension-closed; consequently, they are additive. 

2.  Let  $v$ be another weight structure for $\cu$; suppose that  $\cu_{w\le 0}\subset \cu_{v\le 0}$ and $\cu_{w\ge 0}\subset \cu_{g\le 0}$. Then $w=v$ (i.e., the inclusions are equalities).

II. Under the assumptions of Definition \ref{dwso}(\ref{idneg}) there exists a unique weight structure $w_{\bu}$ on the category $\du=\lan \bu \ra$ whose heart contains $\bu$. 
Moreover, this weight structure is bounded and  $\du_{w_{\bu}=0}=\kar_{\cu}(\obj \bu)$. 
\end{pr}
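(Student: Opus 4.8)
The plan is to treat the three assertions in turn, using throughout the weight decomposition axiom (iv) together with orthogonality (iii) and retraction-closedness (i). For part I.1 I would first prove that $\cu_{w\le 0}$ is extension-closed and then pass to the dual. Given a distinguished triangle $A\to C\to B\to A[1]$ with $A,B\in\cu_{w\le 0}$, I choose a weight decomposition $L_wC\to C\xrightarrow{b} R_wC\to L_wC[1]$ with $L_wC\in\cu_{w\le 0}$ and $R_wC\in\cu_{w\ge 0}[1]$. The composite $A\to C\xrightarrow{b} R_wC$ vanishes by (iii), so $b$ factors through $B$; since $\cu(B,R_wC)=\ns$ by (iii) again, this forces $b=0$. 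A vanishing map $C\to R_wC$ makes $L_wC\to C$ a split epimorphism, so $C$ is a retract of $L_wC$ and hence lies in $\cu_{w\le 0}$ by (i). Applying this to $\cu\opp$, which carries the dual weight structure, gives extension-closedness of $\cu_{w\ge 0}$, and $\cu_{w=0}=\cu_{w\le 0}\cap\cu_{w\ge 0}$ is extension-closed as an intersection. Since each class contains $0$ and is closed under the split triangle $A\to A\bigoplus B\to B\to A[1]$, it is closed under finite direct sums, hence additive.

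For part I.2, reading the hypotheses as $\cu_{w\le 0}\subset\cu_{v\le 0}$ and $\cu_{w\ge 0}\subset\cu_{v\ge 0}$ (the second containment in the statement has an evident typo), it suffices to establish the reverse inclusions. Take $M\in\cu_{v\le 0}$ and a $w$-weight decomposition $L_wM\to M\xrightarrow{b} R_wM\to L_wM[1]$ with $L_wM\in\cu_{w\le 0}$ and $R_wM\in\cu_{w\ge 0}[1]$. The hypotheses give $R_wM\in\cu_{w\ge 0}[1]\subset\cu_{v\ge 0}[1]$, so $v$-orthogonality yields $\cu(M,R_wM)=\ns$ and therefore $b=0$; exactly as before $M$ is then a retract of $L_wM\in\cu_{w\le 0}$, so $M\in\cu_{w\le 0}$. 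Thus $\cu_{v\le 0}=\cu_{w\le 0}$, and the dual argument gives $\cu_{v\ge 0}=\cu_{w\ge 0}$, that is, $w=v$.

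For part II I would construct the classes of $w=w_{\bu}$ explicitly: let $\du_{w\ge 0}$ (resp. $\du_{w\le 0}$) be the retraction-closure in $\du$ of the extension-closure of $\cup_{i\ge 0}\obj(\bu[i])$ (resp. of $\cup_{i\le 0}\obj(\bu[i])$). Axiom (i) holds by construction, and (ii) because, for instance, $\du_{w\le 0}[-1]$ is the analogous closure of $\cup_{i\le -1}\obj(\bu[i])\subset\cup_{i\le 0}\obj(\bu[i])$, whence $\du_{w\le 0}[-1]\subset\du_{w\le 0}$. Axiom (iii) is where connectivity enters: on generators $\obj(\bu[i])\perp\obj(\bu[j])$ whenever $i\le 0<j$, since $\cu(X,Y[j-i])=\ns$ for $X,Y\in\obj\bu$ and $j-i>0$; and orthogonality is preserved under forming extension-closures and retraction-closures in either variable, giving $\du_{w\le 0}\perp\du_{w\ge 0}[1]$. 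Uniqueness is then automatic: any weight structure on $\du$ whose heart contains $\bu$ places each $\bu[i]$, $i\ge 0$ (resp. $i\le 0$), in its class of $w\ge 0$ (resp. $w\le 0$) objects by (ii), and since those classes are extension- and retraction-closed (part I.1 and (i)) they contain $\du_{w\ge 0}$ and $\du_{w\le 0}$; part I.2 forces equality.

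The main obstacle is axiom (iv), the existence of a weight decomposition of every $M\in\obj\du$. Since $\du=\lan\bu\ra$ is strongly generated by $\bu$, each $M$ admits a finite presentation built from shifts of objects of $\bu$ by cones, and I would produce the decomposition by induction on the length of such a presentation, assembling the triangle for $M$ from those of its pieces via the octahedral axiom; the same finiteness shows $M\in\du_{w\ge i}\cap\du_{w\le j}$ for suitable $i,j$, which gives boundedness. Finally, for the heart the inclusion $\kar_{\cu}(\obj\bu)\subset\du_{w=0}$ is immediate, as $\bu\subset\du_{w=0}=\du_{w\le 0}\cap\du_{w\ge 0}$ and the heart is retraction-closed by (i); the reverse inclusion $\du_{w=0}\subset\kar_{\cu}(\obj\bu)$ is the delicate point, asserting that an object lying in both $\du_{w\le 0}$ and $\du_{w\ge 0}$ is a retract of an object of $\bu$. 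This last step is precisely the content of the cited Corollary 2.1.2 of \cite{bonspkar} (equivalently Theorem 4.3.2 of \cite{bws}), which I would invoke rather than reprove.
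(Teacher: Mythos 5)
Your proposal is correct, but it takes a much more self-contained route than the paper, which disposes of the whole proposition by citation: I.1 and I.2 are referred to Proposition 1.2.4(3,7) and Remark 1.2.3(4) of \cite{bonspkar}, and II is declared ``immediate from Corollary 2.1.2 of ibid.'' Your direct arguments for I.1 and I.2 are the standard ones and are sound: the composite $C\to R_wC$ is killed by applying orthogonality twice along the long exact sequence, the resulting vanishing splits the weight decomposition triangle, and retraction-closedness (axiom (i)) finishes the job; you also correctly read $\cu_{w\ge 0}\subset\cu_{g\le 0}$ in I.2 as the typo it is. For II your construction of $\du_{w\le 0}$ and $\du_{w\ge 0}$ as retraction-closures of extension-closures of shifted copies of $\bu$, with orthogonality checked on generators and propagated through the closures, is exactly the mechanism behind the cited results, and your uniqueness argument via I.1, I.2 and axiom (ii) is complete. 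The one place where your write-up remains a genuine sketch is axiom (iv): the induction on the length of a presentation of $M$ needs the nontrivial lemma that weight decompositions of two vertices of a distinguished triangle can be assembled, via the octahedral axiom, into one for the third, and you only gesture at this; likewise the inclusion $\du_{w=0}\subset\kar_{\cu}(\obj\bu)$ is deferred to the same Corollary 2.1.2 the paper relies on. What your approach buys is transparency — the reader sees why the classes are extension-closed and why the weight structure is unique — at the cost of having to fill in the octahedral bookkeeping that the paper deliberately outsources.
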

\begin{proof}
I.1. See Proposition 1.2.4(3) and Remark 1.2.3(4) of \cite{bonspkar}.

2. This is Proposition 1.2.4(7) of loc. cit.

II. Immediate from 
 Corollary 2.1.2 of ibid. 
\end{proof}

\section{On hearts of weight structures}\label{shearts}

\begin{theo}\label{tstable}
The following assumptions on (an additive category) $\bu$ are equivalent as well.

\begin{enumerate}
\item\label{iwse1} $\bu$ is weakly idempotent complete.

\item\label{iwset} There exists a  triangulated category $\cu$ such that $\bu$ is its weakly retraction-closed subcategory. 


\item\label{ise4} $\bu$ is equivalent to the heart of a weight structure.

\item\label{ise5} $\bu$ is equivalent to the heart of a bounded weight structure.

\item\label{ise6} $\bu$ is equivalent to the heart $\hwstub$ of the weight structure $\wstub$ on the category $K^b(\bu)$ (see Remark \ref{rstws}(1)).  

\item\label{ise7} For any category $\bu''$ such that the embedding $\bu''\to \wkar(\bu'')$ factors through a fully faithful functor $\bu\to \wkar(\bu'')$ 
 and $\bu''$  is connective in a triangulated category $\cu$ (see Definition \ref{dwso}(\ref{idneg})), there exists a unique weight structure $w$ on the triangulated subcategory $\du$ of $\cu$ strongly generated by $\bu''$ such that the heart $\hw$ is naturally equivalent to $\bu$ (that is, $\hw$ contains $\bu''$ and the embedding $\bu''\to \hw$ factors through an equivalence of $\bu$ with $\hw$).
\end{enumerate}
\end{theo}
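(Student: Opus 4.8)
The plan is to hinge everything on one computation, namely that for \emph{every} additive $\bu$ the heart $\hwstub$ of $\wstub$ on $K^b(\bu)$ is equivalent to $\wkar(\bu)$ (this also settles the claim announced in Remark \ref{rstws}(1)), together with the observation that hearts of weight structures are automatically weakly idempotent complete. Granting these, the core cycle $(\ref{iwse1})\Rightarrow(\ref{ise6})\Rightarrow(\ref{ise5})\Rightarrow(\ref{ise4})\Rightarrow(\ref{iwse1})$ closes, and conditions $(\ref{iwset})$ and $(\ref{ise7})$ are attached to it.

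First I would dispose of the formal steps. The implications $(\ref{ise6})\Rightarrow(\ref{ise5})\Rightarrow(\ref{ise4})$ are immediate, since $\wstub$ is a bounded weight structure and a bounded weight structure is in particular a weight structure. For $(\ref{ise4})\Rightarrow(\ref{iwse1})$ I would argue directly that any heart $\hw=\cu_{w=0}$ is weakly idempotent complete: given a split monomorphism $i\colon X\to Y$ in $\hw$ with retraction $p$, complete $i$ to a distinguished triangle $X\to Y\to C\to X[1]$ in $\cu$; since $pi=\id_X$ this triangle splits, so $Y\cong X\oplus C$ with $C$ a $\cu$-retract of $Y$. As $\cu_{w=0}=\cu_{w\le 0}\cap\cu_{w\ge 0}$ is retraction-closed by axiom (i) of Definition \ref{dwstr}, we get $C\in\obj\hw$ and the decomposition lives in $\hw$. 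The very same cone argument shows that any triangulated category is itself weakly idempotent complete; combined with the implication $(\ref{ise2w})\Rightarrow(\ref{ise1})$ of Proposition \ref{pstable} this yields $(\ref{iwset})\Rightarrow(\ref{iwse1})$, while $(\ref{iwse1})\Rightarrow(\ref{iwset})$ follows from $(\ref{ise4})$ because a heart is retraction-closed, hence (by Lemma \ref{lwrcl}(1)) weakly retraction-closed, in its ambient triangulated category.

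The main obstacle is the identification $\hwstub\simeq\wkar(\bu)$. Here I would first note that $\bu$ is connective in $K^b(\bu)$ and strongly generates it, so Proposition \ref{pbw}(II) shows that $\wstub$ is the unique weight structure whose heart contains $\bu$ and identifies $\hwstub$ with the class of $K^b(\bu)$-retracts of objects of $\bu$. To compare this with $\wkar(\bu)\subseteq\kar(\bu)$ I would use the termwise functor $\iota\colon K^b(\bu)\to K^b(\kar\bu)$, which is fully faithful and carries $\hwstub$ into the heart of $\wstub$ on $K^b(\kar\bu)$, that heart being $\kar(\bu)$. I claim the resulting fully faithful functor $\hwstub\to\kar(\bu)$ has essential image exactly $\wkar(\bu)$. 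For surjectivity, given $Z\in\wkar(\bu)$ I choose $A,C\in\obj\bu$ and a split monomorphism $u\in\bu(A,C)$ realizing $A\oplus Z\cong C$ in $\kar(\bu)$; then the two-term complex $M=[A\xrightarrow{u}C]$ satisfies $\iota(M)\cong Z$, and $M\oplus A\cong C$ in $K^b(\bu)$ (the isomorphism holds in $K^b(\kar\bu)$ and $\iota$ is fully faithful), so $M$ is a retract of a $\bu$-object and lies in $\hwstub$. For the reverse inclusion I take $N=(N^i)\in\hwstub$; then $\iota(N)$ lies in $\kar(\bu)$, i.e.\ it is homotopy equivalent to a complex concentrated in degree $0$. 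The crucial step is the lemma that a contractible bounded complex over the idempotent complete category $\kar(\bu)$ splits into $\id$-pieces (Proposition \ref{pstable}(\ref{ise3})); applied to the mapping cone of this homotopy equivalence it gives $\bigoplus_i N^{2i}\cong\iota(N)\oplus\bigoplus_i N^{2i+1}$ in $\kar(\bu)$, and since both sums of terms lie in $\bu$ this places $\iota(N)$ in $\wkar(\bu)$. Thus $\hwstub\simeq\wkar(\bu)$, and for weakly idempotent complete $\bu$ Proposition \ref{pstable}(\ref{iwic}) gives $\bu\simeq\wkar(\bu)\simeq\hwstub$, which is $(\ref{iwse1})\Rightarrow(\ref{ise6})$.

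Finally I would treat $(\ref{ise7})$. The factorization hypothesis forces (fully faithfully) $\bu''\subseteq\bu\subseteq\wkar(\bu'')$, whence $\wkar(\bu)=\wkar(\bu'')$. Since any weight structure on $\du=\langle\bu''\rangle$ whose heart contains $\bu''$ must be the canonical one of Proposition \ref{pbw}(II), the asserted $w$ is automatically unique; its heart is $\kar_{\du}(\obj\bu'')$, which is weakly idempotent complete by the step above and is identified with $\wkar(\bu'')$ by the same even/odd-terms mechanism (every heart object is a summand, inside $\du$, of a $\bu''$-object, and its weight complex has terms in $\bu''$). Under $(\ref{iwse1})$ this gives heart $\simeq\wkar(\bu'')=\wkar(\bu)\simeq\bu$, proving $(\ref{iwse1})\Rightarrow(\ref{ise7})$. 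Conversely, applying $(\ref{ise7})$ with $\bu''=\bu$ and $\cu=K^b(\bu)$ produces a weight structure on $K^b(\bu)$ with heart $\simeq\bu$; but that weight structure is forced to be $\wstub$, whose heart is $\hwstub\simeq\wkar(\bu)$, so $\wkar(\bu)\simeq\bu$, which is $(\ref{iwse1})$.
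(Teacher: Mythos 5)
Your proposal is correct, but it is organized around a different backbone than the paper's. The paper runs the single cycle (\ref{ise6})$\Rightarrow$(\ref{ise5})$\Rightarrow$(\ref{ise4})$\Rightarrow$(\ref{iwset})$\Rightarrow$(\ref{iwse1})$\Rightarrow$(\ref{ise7})$\Rightarrow$(\ref{ise6}), so that the only substantive step is (\ref{iwse1})$\Rightarrow$(\ref{ise7}), carried out with the weight complex functor of \cite{bwcp}; the identification of $\hwstub$ is then obtained only for weakly idempotent complete $\bu$, as the specialization $\bu''=\bu$, $\cu=K^b(\bu)$. You instead prove $\hwstub\simeq\wkar(\bu)$ for an \emph{arbitrary} additive $\bu$ by an elementary argument: the termwise full embedding $K^b(\bu)\to K^b(\kar(\bu))$, two-term complexes $[A\to C]$ built from split monomorphisms to hit every object of $\wkar(\bu)$, and the splitting of bounded contractible complexes over the idempotent complete $\kar(\bu)$ (Proposition \ref{pstable}(\ref{ise3})) for the reverse inclusion. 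This buys two things: conditions (\ref{ise4})--(\ref{ise6}) are handled without any weight complex machinery, and the claim of Remark \ref{rstws}(1) that $\hwstub\simeq\wkar(\bu)$ is actually proved rather than quoted. Your (\ref{ise4})$\Rightarrow$(\ref{iwse1}) (cone of a split monomorphism plus axiom (i)) also merges the paper's two steps (\ref{ise4})$\Rightarrow$(\ref{iwset})$\Rightarrow$(\ref{iwse1}) into one. For condition (\ref{ise7}) in full generality you fall back, exactly as the paper does, on the weight complex functor ($M\cong t(M)$ in $K(\hw)$, $t(M)$ homotopy equivalent to a bounded $\bu''$-complex, and the even/odd splitting of the contractible cone, which is legitimate because $\hw$ is already known to be weakly idempotent complete); your write-up of this step is terser than the paper's but names the same ingredients, so nothing essential is missing. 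The only points deserving an extra line are the identification of the heart of $\wstub$ on $K^b(\kar(\bu))$ with $\kar(\bu)$ (a $K^b$-retract of a degree-zero complex corresponds to an idempotent of $\kar(\bu)$, which splits) and the fact that $\bu''\subset\bu\subset\wkar(\bu'')$ forces $\wkar(\bu)=\wkar(\bu'')$ (Lemma \ref{lwrcl}(2)); both are routine.
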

\begin{proof}
Clearly, condition  \ref{ise6} implies condition   \ref{ise5},  and  \ref{ise5} implies \ref{ise4}.  Next,  
  we can take $\bu''=\bu$ in condition \ref{ise7}. Since $\bu$ is connective in the category $\cu=\du=K^b(\bu)$ and strongly generates it, 
 we obtain that condition \ref{ise7} implies condition  \ref{ise6}.

Now, axiom (i) of Definition \ref{dwstr} implies that $\hw$ is retraction-closed in $\cu$ (note that it is an additive subcategory by Proposition \ref{pbw}(I.1)). Thus condition \ref{ise4} implies condition \ref{iwset}.

Furthermore, any triangulated category is easily seen to be weakly idempotent complete since for $X$ and $Y$ as in Definiton \ref{dstable}(1) we have $Y\cong X\bigoplus \co(X\to Y)$. Thus condition  \ref{iwset} implies that  $\bu$ is weakly idempotent complete (i.e., that condition \ref{iwse1} is fulfilled); see Proposition \ref{pstable}(\ref{ise2w}).

Thus it remains to verify that  any weakly idempotent complete category $\bu$ fulfils condition \ref{ise7}.  The existence and the uniqueness of a weight structure $w$ on $\du$ 
 such that $\bu''\subset \hw$ 
  follows immediately from  Proposition \ref{pbw}(II); we also obtain  the existence of a fully faithful functor $\hw\to \kar(\bu'')$, whereas the latter category is clearly equivalent to $\kar(\bu)$. 
	Moreover,  $\hw$ is weakly idempotent complete (recall that we have just proved that our condition \ref{ise4} implies condition  \ref{iwse1}); hence Corollary \ref{cwidc} implies that the embedding $\bu\to \bu''$ factors through a full embedding of $\bu$ into $\hw$.

	 Since $\bu$ is weakly idempotent complete, 
	 it remains to verify that for any $M\in \du_{w=0}$ there exist objects $X$ and $Y$ of $\bu$ such that $M\bigoplus X\cong Y$. We will deduce this statement from the existence of splittings of contractible complexes in 
	$K^b(\hw)$; 
	 for this purpose we invoke the theory of  {\it (weak) weight complex functors} as provided by Proposition 1.3.4 of \cite{bwcp}.
	
	Part 6 of  loc. cit. associates to $M$ its weight complex $t(M)\in \obj K(\hw)$. Parts 4 and 10 of loc. cit. imply that $t(M)\cong M$ (in the homotopy category $K(\hw)$). On the other hand, parts 4 and 9 easily yield that $t(M)$ is homotopy equivalent to a complex $N\in \obj K^b(\bu'')\subset K(\hw)$. Hence there exists a $K(\hw)$-morphism $f:M\to N$ such that $\co(f)$ is contractible. Since $\co(f)\in \obj K^b(\hw)$ and  we have already proved that $\hw$ is weakly idempotent complete, we obtain that  $\co(f)$ splits in $K^b(\hw)$. Now, if $N^i\in \obj \bu''$ are the terms of $N$, then this splitting yields $M\bigoplus_{j\in \z}N^{2i-1}\cong  \bigoplus_{j\in \z}N^{2i}$. 
	 This concludes the proof.
	\end{proof}

\begin{rema}\label{rrts} Weight structures are well known to be closely related to $t$-structures (as introduced in  \S1.3 of \cite{bbd}). However, the properties of weight structures are significantly distinct from that of $t$-structures. Recall in particular that the hearts of $t$-structures are precisely the abelian categories.   Hence there are plenty of additive categories that are hearts of some weight structures and cannot occur as hearts of $t$-structures; cf. Remark \ref{rstable}(\ref{irse2p}).
\end{rema}

The following statement gives one more characterization of weak idempotent completions as well as certain Grothendieck group isomorphisms.

\begin{coro}\label{cequivkb}
1. If $\bu\subset \bu'$ then the corresponding embedding $K^b(\bu)\to K^b(\bu')$ is an equivalence if and only if the embedding $\bu\to \wkar(\bu)$ factors through  a fully faithful functor from $\bu'$ into $\wkar(\bu)$.

2. Consequently, if $\bu'$ is essentially small and $\bu\subset \bu' \subset \wkar(\bu)$ then  
the obvious homomorphism $K_0^{\add}(\bu)\to K_0^{\add}(\bu')$   (see Definition \ref{dstable}(2)) is bijective.
\end{coro}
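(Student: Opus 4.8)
The plan is to prove part~1 by treating its two directions separately and then to deduce part~2 from an identification of Grothendieck groups. Throughout I identify the heart of the stupid weight structure $\wstub$ on $K^b(\bu)$ with $\wkar(\bu)$ via Remark~\ref{rstws}(1), the objects of $\bu$ sitting inside as the degree-zero complexes.

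For the ``if'' direction of part~1, after replacing $\bu'$ by an equivalent category I may assume $\bu\subset \bu'\subset \wkar(\bu)$ with full inclusions, and I want the embedding $\Phi\colon K^b(\bu)\to K^b(\bu')$ to be an equivalence. Full faithfulness is immediate: as $\bu$ is full in $\bu'$, chain maps and homotopies between complexes with terms in $\bu$ agree whether computed in $\bu$ or in $\bu'$. The content is essential surjectivity, which I would get by a telescope trick. Given $B'\in\obj\bu'$, choose $X,Y\in\obj\bu$ with $Y\cong B'\bigoplus X$, let $p\colon Y\to X$ be the resulting split epimorphism (with kernel $\cong B'$), and observe that the two-term complex $[Y\xrightarrow{p}X]$ has terms in $\bu$ yet is homotopy equivalent in $K^b(\bu')$ to $B'$ concentrated in one degree. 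Thus every object of $\bu'$ lies in the essential image of $\Phi$; since $\Phi$ is fully faithful this image is a strictly full triangulated subcategory of $K^b(\bu')$, and as it contains $\bu'$, which strongly generates $K^b(\bu')$, it is everything.

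For the ``only if'' direction I start from the embedding $\Phi$ being an equivalence and must produce a fully faithful $\bu'\to\wkar(\bu)$ factoring $\bu\to\wkar(\bu)$. The key point is that $\Phi$ is compatible with the stupid weight structures: it sends complexes over $\bu$ concentrated in nonnegative (resp. nonpositive) degrees to such complexes over $\bu'$, so it preserves the classes $(K^b(\bu))_{\wstub\le 0}$ and $(K^b(\bu))_{\wstub\ge 0}$. Transporting $\wstub$ along the equivalence $\Phi$ then yields a weight structure $w'$ on $K^b(\bu')$ whose classes are contained in those of $\wstub$, so the rigidity statement Proposition~\ref{pbw}(I.2) forces $w'=\wstub$ and $\Phi$ restricts to an equivalence of hearts $\wkar(\bu)\cong\wkar(\bu')$. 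Since $\bu'$ sits in its heart $\wkar(\bu')$ as the degree-zero complexes, the inverse equivalence carries $\bu'$ into $\wkar(\bu)$; restricting gives the desired fully faithful functor, and evaluating it on $\bu$ (where $\Phi$ is the identity embedding) shows it factors the canonical embedding $\bu\to\wkar(\bu)$.

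For part~2 I would first record the isomorphism $K_0^{\add}(\bu)\cong K_0(K^b(\bu))$, valid for any additive $\bu$: inclusion as degree-zero complexes sends direct-sum relations to split-triangle relations, and the inverse is the Euler characteristic $[M]\mapsto\sum_i(-1)^i[M^i]$ (well defined since homotopic complexes and cones behave additively). Then part~1 gives $K^b(\bu)\cong K^b(\bu')$, hence $K_0(K^b(\bu))\cong K_0(K^b(\bu'))$, and chasing these identifications—each of which sends the class of an object of $\bu$ to its own class—shows $K_0^{\add}(\bu)\to K_0^{\add}(\bu')$ is bijective. Alternatively, one argues directly that the inclusion induces $K_0^{\add}(\bu)\cong K_0^{\add}(\wkar(\bu))$, with inverse $[W]\mapsto[Y]-[X]$ for any $W\bigoplus X\cong Y$, $X,Y\in\obj\bu$ (well-definedness uses $Y\bigoplus X'\cong Y'\bigoplus X$), and applies this to both $\bu$ and $\bu'$ after noting $\wkar(\bu')=\wkar(\bu)$. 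I expect the main obstacle to be the ``only if'' direction of part~1: the delicate step is upgrading the abstract triangulated equivalence $K^b(\bu)\cong K^b(\bu')$ to a statement about hearts, i.e.\ showing it respects the stupid weight structures and hence carries $\bu'$ into $\wkar(\bu)$ compatibly with $\bu$. This is precisely where weight-structure rigidity and the identification of the heart of $\wstub$ with $\wkar$ are indispensable, the rest of the argument being comparatively formal.
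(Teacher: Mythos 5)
Your proof is correct, and its core coincides with the paper's: for the ``only if'' direction both arguments transport the stupid weight structure along the equivalence $K^b(\bu)\cong K^b(\bu')$, observe the inclusion of the weight classes, invoke the rigidity statement Proposition \ref{pbw}(I.2) to conclude the two weight structures agree, and then identify the heart $\hwstub$ with $\wkar(\bu)$ via Theorem \ref{tstable}(\ref{ise6}) to land $\bu'$ inside $\wkar(\bu)$. The genuine difference is in the ``if'' direction: the paper gets essential surjectivity by citing Theorem \ref{tstable}(\ref{ise7}) to say that the isomorphism-closure of $K^b(\bu)$ in $K^b(\bu')$ already essentially contains $\wkar(\bu)\supset\bu'$, whereas you prove this directly with the two-term complex $[Y\to X]$ attached to a splitting $Y\cong B'\bigoplus X$, which is contractible up to the summand $B'$; this is more elementary and self-contained (it reproves the relevant special case of condition \ref{ise7} rather than quoting it), at the cost of a few extra lines. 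For part 2 the paper simply cites Rose's theorem that $K_0^{\add}(\au)$ is a triangulated Grothendieck group of $K^b(\au)$, while you sketch the Euler-characteristic isomorphism $K_0^{\add}(\bu)\cong K_0(K^b(\bu))$ yourself; the only point that deserves more care there is the well-definedness of $[M]\mapsto\sum_i(-1)^i[M^i]$ on homotopy classes (one must check that bounded contractible complexes have vanishing Euler class in $K_0^{\add}$ even when they need not split), which is exactly the content of the cited result of Rose. Your alternative direct argument for part 2, via the retraction $[W]\mapsto[Y]-[X]$, is also sound and arguably cleaner; it matches the paper's Remark \ref{requivk}(3) and Corollary \ref{cwidc}(3).
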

\begin{proof} 1. Assume that the embedding $K^b(\bu)\to K^b(\bu')$ is an equivalence. Then we can assume that the stupid weight structure on  $K^b(\bu')$ (see Remark \ref{rstws}(1)) gives a weight structure $v$ on $\cu= K^b(\bu)$. Since the classes $ \cu_{v\le 0}$ and $\cu_{v\ge  0}$ are closed with respect to isomorphisms (see the axiom (i) in Definition \ref{dwstr}), we obtain $\cu_{\wstub\le 0}\subset \cu_{v\le 0}$ and $\cu_{\wstub\ge 0}\subset \cu_{v\ge 0}$. Thus $\wstub=v$ according to  Proposition \ref{pbw}(I.2). Since $\hwstub$ is equivalent to $\wkar(\bu)$ (see condition \ref{ise6} in Theorem \ref{tstable}), 
  this clearly gives  a fully faithful functor from $\bu'$ into $\wkar(\bu)$. 

Now let us prove the converse implication. 
We should prove that $\du= K^b(\bu')$, where $\du$ is the closure of $K^b(\bu)$ in $ K^b(\bu')$ with respect to isomorphisms, if  $\bu'$  is equivalent to a subcategory of $\wkar(\bu)$. Now, $\du$ is a strictly full subcategory of $ K^b(\bu')$ that essentially contains 
 $\wkar(\bu)$ (see condition \ref{ise7} in Theorem \ref{tstable}). Since $ K^b(\bu')$ is clearly strongly generated (see Definition \ref{dwso}(\ref{idense})) by $\bu'$, we easily obtain the equality in question.

2. 
 According to assertion 1, the embedding $K^b(\bu)\to K^b(\bu')$ is an equivalence in this case. Thus it suffices to recall that for any essentially small (additive) category $\au$ the group $K_0^{\add}(\au)$ can be computed as a certain triangulated Grothendieck group of the category $K^b(\au)$; see  Definition 2 and Theorem 1 of \cite{rose}. 
\end{proof}

\begin{rema}\label{requivk} 1. One can also prove that a category $\bu'\supset \bu$  is essentially a subcategory of $\kar(\bu)$ if and only if $K(\bu)\cong K(\bu')$; this is also equivalent to  $K^+(\bu)\cong K^+(\bu')$ and $K^-(\bu)\cong K^-(\bu')$. To prove the "if" implications here one can apply stupid weight structure arguments similar to the one 
 in the proof of Corollary \ref{cequivkb}, and one can use  a reasoning similar to a one in Remark 3.3.2(2) of \cite{bwcp} to obtain the converse implications.

2. This observation along with Proposition \ref{pstable}(\ref{ise3}), Proposition 10.9 of \cite{buhler} (see Remark \ref{rstable}(\ref{irse3}), and Remark \ref{rstws} justifies the following vague claim: $\kar(\bu)$ is the "extension" of $\bu$ corresponding to unbounded $\bu$-complexes, wheras $\wkar(\bu)$ "corresponds to" $K^b(\bu)$.

3. One can also prove Corollary \ref{cequivkb}(2) more explicitly; cf. Corollary \ref{cwidc}(3).
\end{rema}

\end{document}